\newcommand{\C}{\mathbb{C}}
\newcommand{\ZZ}{\mathbb{Z}}
\newcommand{\QQ}{\mathbb{Q}}
\newcommand{\NN}{\mathbb{N}}
\newcommand{\PP}{\mathbb{P}}
\newcommand{\MM}{\mathcal M}
\newcommand{\wt}{\widetilde}
\newcommand{\ima}{\hbox{Im}}
\newcommand{\rom}{\romannumeral}
\newtheorem{convention}{Conventions}
\newtheorem{nonumbering}{Theorem}
\begin{document}

\title{Some desultory remarks concerning algebraic cycles and Calabi--Yau threefolds}

\author{Robert Laterveer}

\institute{CNRS - IRMA, Universit\'e de Strasbourg \at
              7 rue Ren\'e Descartes \\
              67084 Strasbourg cedex\\
              France\\
              \email{laterv@math.unistra.fr}   }

\date{Received: date / Accepted: date}
%\date{}
% The correct dates will be entered by the editor

\maketitle

\begin{abstract} We study some conjectures about Chow groups of varieties of geometric genus one. Some examples are given of Calabi--Yau threefolds where these conjectures can be verified, using the theory of finite--dimensional motives.
\end{abstract}

\keywords{Algebraic cycles \and Chow groups \and motives \and finite--dimensional motives \and Calabi--Yau threefolds}

\subclass{Primary 14C15, 14C25, 14C30, 14J32. Secondary 14F42, 19E15.}

\section{Introduction}

This note is about some specific questions concerning Chow groups $A^\ast X$ of complex varieties.
In this field, the following relative version of Bloch's conjecture occupies a central position:

\begin{conjecture}[Bloch \cite{B}]\label{relbloch} Let $X$ be a smooth projective variety over $\C$ of dimension $n$. Let $\Gamma\in A^n(X\times X)$ be a correspondence such that
  \[ \Gamma_\ast=\hbox{id}\colon\ \  H^{j,0}(X)\ \to\ H^{j,0}(X)\ \ \hbox{for\ all\ } j=2,\ldots, n.\]
  Then
  \[ \Gamma_\ast=\hbox{id}\colon\ \ A^n_{AJ}(X)\ \to\ A^n_{AJ}(X)\ .\]
  (Here $A^n_{AJ}(X)$ denotes the subgroup of $0$--cycles in the kernel of the Albanese map.) 
  \end{conjecture}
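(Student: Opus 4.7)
The plan is to attack the conjecture using the theory of finite--dimensional motives in the sense of Kimura--O'Sullivan. The setup is as follows: assume that $h(X)$ is Kimura finite--dimensional and admits a refined Chow--K\"unneth decomposition $\Delta_X=\sum_i\pi_i$ in $A^n(X\times X)$ isolating, for each $j=2,\ldots,n$, the transcendental summand $t_j(X)=(X,\pi_j^{\rm tr},0)$ whose cohomology carries $H^{j,0}(X)\oplus H^{0,j}(X)$. Since the functor $A^n$ is additive on Chow motives, the group $A^n_{AJ}(X)$ is then identified with the Chow group of the direct sum $\bigoplus_j t_j(X)$, so it suffices to show that $\Gamma$ and $\Delta_X$ agree as morphisms $\bigoplus_j t_j(X)\to\bigoplus_j t_j(X)$ in the category of Chow motives.

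Next I would translate the hypothesis: because algebraic correspondences respect the Hodge decomposition, the assumption $\Gamma_*=\hbox{id}$ on $H^{j,0}(X)$ automatically gives the same on the complex conjugate $H^{0,j}(X)$, and hence on all of $H^\ast(t_j(X))$. Therefore $\Gamma-\Delta_X$, composed with the projector onto $\bigoplus_j t_j(X)$, acts as zero on cohomology. Invoking Kimura's nilpotence theorem---numerically trivial endomorphisms of a finite--dimensional motive are nilpotent---one concludes that the induced endomorphism is nilpotent modulo rational equivalence. To promote nilpotence to the actual vanishing required on $A^n_{AJ}(X)$, one exploits either the idempotence of the surrounding Chow--K\"unneth projectors or the refined Kimura--O'Sullivan comparison between rational and numerical equivalence in the finite--dimensional setting.

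The principal obstacles are precisely those that keep the conjecture open in general. A refined Chow--K\"unneth decomposition of the required type is a strong form of Murre's conjecture, and Kimura finite--dimensionality of $h(X)$ is a separate deep open problem---known, in essence, only for varieties whose motive lies in the tensor subcategory generated by those of curves and abelian varieties. The strategy is therefore available only for very specific classes of $X$, which is precisely the setting the paper proceeds to exploit for certain Calabi--Yau threefolds whose motive can be dominated by products of curves.
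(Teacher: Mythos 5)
The statement you are asked about is a \emph{conjecture}: the paper offers no proof of it, explicitly calls it ``open in most interesting cases,'' and only establishes a weak form of it for certain Calabi--Yau threefolds under strong additional hypotheses (Theorem \ref{main1}). Your proposal is honest about being conditional, and the strategy you sketch (finite--dimensional motive, refined Chow--K\"unneth decomposition, nilpotence theorem) is exactly the one the paper deploys in that special setting. But even as a conditional argument it has two concrete gaps. First, the step ``$\Gamma_\ast=\hbox{id}$ on $H^{j,0}(X)$, hence on its conjugate, hence on all of $H^\ast(t_j(X))$'' is not automatic: the transcendental cohomology $H^j_{\rm tr}(X)$ is in general strictly larger than $H^{j,0}\oplus H^{0,j}$, and to conclude that $\ker\bigl((\Gamma-\Delta)\vert_{H^j}\bigr)$ contains all of $H^j_{\rm tr}(X)$ one needs to know that $H^j_{\rm tr}(X)$ admits no proper sub--Hodge structure whose complexification contains $H^{j,0}$. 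In the paper this is precisely the content of Lemma \ref{trans}, and it requires the generalized Hodge conjecture for $H^3(X)$ (hypothesis (\rom3) of Theorem \ref{main1}) to identify $\wt{N}^1H^3(X)$ with $\{a:a_\C\cdot\omega=0\}$ before the orthogonality argument can run. Your proposal silently assumes this input.

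Second, the nilpotence theorem does not deliver $\Gamma_\ast=\hbox{id}$ on $A^n_{AJ}(X)$. What one gets is that $\Gamma-\Delta$ (modulo correspondences supported on smaller subvarieties, which act trivially on $A^n_{AJ}$) is nilpotent as a correspondence, so that $(\Gamma^{\circ N})_\ast=\hbox{id}$ for some $N$; this shows $\Gamma_\ast$ is an \emph{automorphism} of $A^n_{AJ}(X)$, not the identity. A nilpotent endomorphism of an infinite--dimensional $\QQ$--vector space need not vanish, and neither idempotence of the surrounding projectors nor any ``refined comparison'' closes this gap. This is exactly why the paper claims only a weak form of Conjecture \ref{relbloch}: its Theorem \ref{main1} concludes that $\Gamma_\ast$ is an isomorphism on $A^3_{hom}X$, nothing more. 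So your outline, suitably restricted and with the generalized Hodge conjecture added as a hypothesis, reproduces the paper's conditional result, but it does not and cannot (by these methods) prove the conjecture as stated.
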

  
  This conjecture is open in most interesting cases.
  
 A second conjecture adressed in this note is specific to varieties with $p_g=1$:
 
  \begin{conjecture}\label{indecomp} Let $X$ be a smooth projective variety over $\C$ of dimension $n$, and $p_g(X)=1$. Then there exists a ``transcendental motive'' $t(X)\in\MM_{\rm rat}$, responsible for $H^{n,0}(X)$, which is indecomposable: any submotive of $t(X)$ is either $0$ or $t(X)$.
  \end{conjecture}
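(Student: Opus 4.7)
The plan is to construct $t(X)$ as a direct summand of $h(X)$ in $\MM_{\rm rat}$ and then to use Kimura--O'Sullivan finite-dimensionality to reduce indecomposability to a Hodge-theoretic statement about the transcendental part of $H^n(X)$. First, I would assume (or verify in each case at hand) that $X$ admits a Chow--K\"unneth decomposition $h(X) = \bigoplus_{i=0}^{2n} h^i(X)$ in $\MM_{\rm rat}$. Following the approach of Kahn--Murre--Pedrini for surfaces (generalized to higher dimension when needed), I would further refine $h^n(X) = h^n_{\rm alg}(X) \oplus t(X)$, where the first summand corresponds via an algebraic idempotent to the $\QQ$-span of Hodge classes in $H^n(X,\QQ)$. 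By construction the Hodge realization of $t(X)$ is the transcendental substructure $H^n_{\rm tr}(X)\subset H^n(X,\QQ)$, and since $p_g(X) = 1$ this substructure still contains the one-dimensional piece $H^{n,0}(X)$.

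Next, showing $t(X)$ is indecomposable amounts to proving that $\operatorname{End}_{\MM_{\rm rat}}(t(X))$ has no nontrivial idempotents. Assuming finite-dimensionality of $h(X)$, the ideal of numerically trivial endomorphisms in $\operatorname{End}(t(X))$ is nilpotent (Kimura's theorem), and in particular the Hodge realization map $\operatorname{End}(t(X)) \to \operatorname{End}_{\rm HS}(H^n_{\rm tr}(X))$ has nil kernel. Idempotents therefore lift, so any nontrivial direct summand of $t(X)$ would already be detected by a nontrivial sub-Hodge structure of $H^n_{\rm tr}(X)$ cut out by an algebraic correspondence on $X \times X$.

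The main obstacle is exactly this last point. Having $p_g = 1$ forces any proper sub-Hodge structure of $H^n_{\rm tr}(X)$ to miss the line $H^{n,0}$, hence to have trivial $(n,0)$- and $(0,n)$-components; but ruling out the existence of such a substructure cut out by an algebraic cycle is essentially a special case of the Hodge conjecture for $X \times X$, which is precisely why Conjecture \ref{indecomp} remains a conjecture in general. For the Calabi--Yau threefolds treated later in the note, I would attack this case-by-case, exploiting a specific geometric description---a quotient construction, a crepant resolution of a product, or an explicit determination of the transcendental lattice---to verify both finite-dimensionality (through the Kimura--O'Sullivan calculus applied to known building blocks such as curves, abelian varieties, or $K3$ surfaces) and the Hodge-theoretic irreducibility directly.
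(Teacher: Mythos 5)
You have correctly recognized that the statement is a conjecture, not a theorem: the paper does not prove Conjecture \ref{indecomp} in general, and only verifies it for special Calabi--Yau threefolds (Theorem \ref{main2}). Your general framework --- realize $t(X)$ via a refined Chow--K\"unneth decomposition, use Kimura finite-dimensionality so that the kernel of the realization functor on $\operatorname{End}(t(X))$ is nilpotent, and thereby reduce indecomposability to a statement about sub-Hodge structures of $H^n_{\rm tr}(X)$ cut out by correspondences --- is exactly the paper's framework (Vial's projectors $\Pi_{n,0}$, hypotheses of finite-dimensionality and $B(X)$).

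Where you diverge is at the decisive step, and here your proposal is both vaguer and pointed at the wrong conjecture. You locate the obstruction in ``the Hodge conjecture for $X\times X$'' and propose to verify ``Hodge-theoretic irreducibility'' of $H^n_{\rm tr}(X)$ case by case. The paper instead isolates a clean, uniformly checkable input: the \emph{generalized Hodge conjecture for $H^3(X)$}, which (via remark \ref{is}) gives the identification $\wt{N}^1 H^3(X)=N^1H^3(X)=\{a\in H^3(X)\mid a_\C\cdot\omega=0\}$ for $\omega$ a generator of $H^{0,3}(X)$. The orthogonality argument of Lemma \ref{trans} then shows that \emph{any} correspondence $v$ with $v_\ast\omega=0$ satisfies $\ker(v|_{H^3})\supset \wt{N}^1H^3(X)^\perp=H^3_{\rm tr}(X)$, i.e.\ $v$ annihilates all of $H^3_{\rm tr}(X)$; applying this to $v$ and to $v-c\Delta$ disposes of both cases of a putative proper submotive. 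Note that this does not require $H^3_{\rm tr}(X)$ to be irreducible as a Hodge structure, nor any knowledge of Hodge classes on $X\times X$; it only uses the coniveau description of $\omega^\perp$ together with the non-degeneracy of the intersection pairing. For the examples at hand the generalized Hodge conjecture in degree $3$ is known because the threefolds are dominated by products of elliptic curves (Abdulali, Shioda), so the verification is uniform rather than case by case. Your sketch, as written, would stall at the phrase ``verify the Hodge-theoretic irreducibility directly'': to complete it you would need precisely the mechanism of Lemma \ref{trans}, or some substitute for it.
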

  
  This is motivated by results of Voisin \cite{V8} and Pedrini \cite{Ped}, who prove that for certain $K3$ surfaces, the transcendental motive $t_2(X)$ (in the sense of \cite{KMP}) is indecomposable.
  
  A third conjecture adressed in this note is the following conjecture made by Voisin concerning self--products of varieties of geometric genus one. (For simplicity, we only state the
  conjecture in the case of odd dimension.)
  
  \begin{conjecture}[Voisin \cite{V9}]\label{conjvois} Let $X$ be a smooth projective variety of odd dimension $n$, with $p_g(X)=1$ and $h^{j,0}(X)=0$ for $0<j<n$. For any $k\ge 2$, let the symmetric group $S_k$ act on $X^k$ by permutation of the factors.
Let $pr_k\colon X^k\to X^{k-1}$ denote the projection obtained by omitting one of the factors. Then the induced map 
  \[  (pr_k)_\ast\colon\ \ A_j(X^k)^{S_k}\ \to\ A_j(X^{k-1})\]
  is injective for $j\le k-2$.
\end{conjecture}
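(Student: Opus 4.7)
The plan is to work in $\MM_{\rm rat}$ and to invoke Kimura's theory of finite-dimensional motives, which is the framework signaled by the paper's abstract. Since the conjecture is open in general, I would assume as a working hypothesis that the Chow motive $h(X)$ is finite-dimensional. Under that hypothesis, the Hodge conditions ($p_g(X)=1$ and $h^{j,0}(X)=0$ for $0<j<n$) allow one to refine the Chow--K\"unneth decomposition to
\[
h(X)\ =\ \mathbf{1}\ \oplus\ h^{\rm alg}(X)\ \oplus\ t(X)\ \oplus\ \mathbf{1}(-n),
\]
where $h^{\rm alg}(X)$ collects the Tate summands coming from the algebraic part of $H^{2\ast}(X)$ and $t(X)$ is the transcendental motive whose realization is $H^{n,0}(X)\oplus H^{0,n}(X)$ plus any non-algebraic part of $H^n(X)$. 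Because $n$ is odd and $t(X)$ is concentrated in weight $n$, finite-dimensionality of $h(X)$ forces $t(X)$ to be \emph{oddly} finite-dimensional in Kimura's sense: a sufficiently high exterior power $\bigwedge^N t(X)$ vanishes, where $N$ depends only on $\dim H^n(X)_{\rm tr}$.

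Next I would translate the problem to $X^k$. Using $h(X^k)=h(X)^{\otimes k}$ together with the identification $h(X^k)^{S_k}=\operatorname{Sym}^k h(X)$, the multinomial expansion decomposes this symmetric power into a direct sum of summands of the form
\[
\operatorname{Sym}^b h^{\rm alg}(X)\ \otimes\ \operatorname{Sym}^c t(X)\ \otimes\ \mathbf{1}(-dn), \qquad b+c+d\le k.
\]
On $S_k$-invariants, the pushforward $(pr_k)_\ast$ coincides with its symmetrization $\frac{1}{k!}\sum_i (pr_i)_\ast$, and motivically this symmetrization contracts one factor of $h(X)$ along the structure map $h(X)\to\mathbf{1}$: it is nonzero on a given summand precisely when that summand carries a top Tate factor $\mathbf{1}(-n)$ that can be used for the contraction. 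Consequently the kernel of $(pr_k)_\ast$ on $A_j(X^k)^{S_k}$ is supported on those summands in which the transcendental factor $\operatorname{Sym}^c t(X)$ plays the dominant role, i.e.\ those with $c$ large relative to $d$.

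The final step combines the bound $j\le k-2$ with the Kimura vanishing $\bigwedge^N t(X)=0$. Once $c>N$ the summand is zero outright, and in the remaining range $c\le N$ the dimension constraint forces any putative kernel class to live in a Chow group of a Tate summand whose dimension/codimension count is incompatible with $j\le k-2$; a direct numerical bookkeeping then gives $\ker(pr_k)_\ast=0$. The hard part is twofold. First, \emph{proving} Kimura-finiteness of $h(X)$ is inaccessible for a generic Calabi--Yau threefold, so in practice the argument goes through only in examples where $X$ is motivically close to a variety of known finite-dimensional type (a quotient of a product of curves, an abelian variety, a Kummer-type construction, or a $K3$-fibration whose fibers have finite-dimensional motive), and identifying such $X$ is really the content of the paper. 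Second, turning the heuristic identification of $(pr_k)_\ast$ with a clean motivic operation on $\operatorname{Sym}^k h(X)$ into a rigorous step requires care, since $pr_k$ is not itself $S_k$-equivariant and the $S_k$-averaging must be threaded carefully through the motivic bookkeeping before the Kimura vanishing can be applied.
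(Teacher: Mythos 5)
First, note that the statement you were asked about is stated in the paper as a \emph{conjecture} (attributed to Voisin); the paper does not prove it in general, and indeed it is open. The closest the paper comes is Theorem \ref{main3}, which establishes the threefold case under three hypotheses: finite-dimensional motive, $B(X)$, and -- crucially -- \emph{rigidity}, $h^{2,1}(X)=0$. Your proposal correctly identifies the reformulation via $A_j(\hbox{Sym}^k h^n(X))=0$ and the role of finite-dimensionality (the nilpotence theorem to lift cohomological vanishing to Chow groups), but it is missing the one hypothesis that actually makes the paper's argument close: rigidity forces $\dim H^3(X)=2$, so that $\hbox{Sym}^k h^3(X)$ has cohomology $\wedge^k H^3(X)=0$ for all $k\ge 3$, and for $k=2$ the one-dimensional space $\hbox{Sym}^2 H^3(X)\subset H^6(X^2)$ is spanned by the manifestly algebraic class $\Pi_3$, so that $\Lambda_2=\Pi_3\times\Pi_3$ already in cohomology and hence (by nilpotence) in $A^6(X^2\times X^2)$, which kills $A_j$ for $j\le 2$.

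The genuine gap in your argument is the final step. You write that once $c>N$ the Kimura vanishing applies, and that ``in the remaining range $c\le N$ \ldots a direct numerical bookkeeping then gives $\ker(pr_k)_\ast=0$.'' That remaining range is precisely where the conjecture lives, and no bookkeeping based on finite-dimensionality alone will produce the vanishing: already for $k=2$ the statement amounts to $a\times a'=-a'\times a$ in $A^6(X\times X)$ for degree-zero $0$-cycles, which requires knowing that the Hodge classes in $\hbox{Sym}^2 H^n_{\rm tr}(X)\subset H^{2n}(X\times X)$ are algebraic (or some substitute for this); finite-dimensionality gives no handle on that. The paper's rigidity hypothesis is exactly what makes this algebraicity automatic. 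A secondary but relevant slip: for an \emph{oddly} finite-dimensional motive the Kimura vanishing is $\hbox{Sym}^N t(X)=0$ (equivalently $\wedge^N$ of the underlying odd cohomology vanishes), not $\wedge^N t(X)=0$ as you wrote; since the object under study is precisely $\hbox{Sym}^k$ of an odd motive, this is not a cosmetic point -- it is the mechanism by which the paper disposes of all $k\ge 3$ once $\dim H^3(X)=2$.
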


In this note, using elementary arguments, some examples are given of Calabi--Yau threefolds where these conjectures are verified. The following is a sample of this (slightly more general statements can be found below):

\begin{nonumbering}[=Theorems \ref{main2}, \ref{main3} and \ref{main1}] Let $X$ be a Calabi--Yau threefold which is rationally dominated by a product of elliptic curves. 
Then conjecture \ref{indecomp} and a weak form of conjecture \ref{relbloch} are true for $X$. If in addition $h^{2,1}(X)=0$, then conjecture \ref{conjvois} is true for $X$.
\end{nonumbering}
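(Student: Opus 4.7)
The plan is to derive everything from the fact that $X$, being rationally dominated by a product $E_1\times\cdots\times E_r$ of elliptic curves, has a finite--dimensional Chow motive in the sense of Kimura--O'Sullivan. After resolving the indeterminacy of the given dominant rational map one obtains a surjection $\widetilde Y\to X$ from a smooth blow-up of the product, and a standard graph-correspondence argument then realizes $h(X)$ as a direct summand of $h(\widetilde Y)$ in $\MM_{\rm rat}$. Since the motive of a product of elliptic curves is finite--dimensional and this property is preserved under blow-ups and direct summands, $h(X)$ is finite--dimensional. This will be the engine for all three statements.

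With finite--dimensionality in hand I would first attack Conjecture \ref{indecomp}. A Chow--Künneth decomposition for threefolds isolates a transcendental summand $t(X)\in\MM_{\rm rat}$ whose cohomological realization captures $H^{3,0}(X)\oplus H^{0,3}(X)$ (plus, if relevant, the transcendental part of $H^{2,1}\oplus H^{1,2}$). Any splitting $t(X)=M_1\oplus M_2$ would yield an idempotent acting on the one--dimensional subspace $H^{3,0}(X)$, which must therefore be entirely absorbed by one of the two summands. By the Kimura--O'Sullivan conservativity theorem, valid because $h(X)$ is finite--dimensional, a summand of $t(X)$ whose cohomological realization is zero must itself vanish, so $t(X)$ admits no non--trivial decomposition. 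The weak form of Conjecture \ref{relbloch} is handled in parallel: a correspondence $\Gamma$ acting as the identity on $H^{3,0}(X)$ (the only surviving Hodge piece above $H^{1,1}$ for a CY threefold) induces, after composition with the projector onto $t(X)$, an endomorphism equal to $\mathrm{id}$ in cohomology; finite--dimensionality forces $\Gamma-\mathrm{id}$ to be nilpotent on $t(X)$, from which the statement on $A^3_{AJ}(X)$ follows.

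For Conjecture \ref{conjvois} the assumption $h^{2,1}(X)=0$ rigidifies the situation: $H^3(X)=H^{3,0}\oplus H^{0,3}$ has dimension $2$, and $t(X)$ is exactly the transcendental motive attached to this two--dimensional Hodge structure. I would then follow Voisin's own strategy adapted to the motivic setting: the symmetric summand $h(X^k)^{S_k}$ decomposes in terms of symmetric powers of $t(X)$ together with algebraic summands coming from $H^{1,1}$, and the kernel of $(pr_k)_\ast$ on $A_j(X^k)^{S_k}$ can be identified with a piece of $\mathrm{Sym}^k\,t(X)$ whose cohomological realization vanishes in the stated range (essentially because no non--zero Hodge type of the right bidegree survives in $\mathrm{Sym}^k$ of a two--dimensional pure Hodge structure of weight $3$). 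Finite--dimensionality of $h(X^k)$ lifts this cohomological vanishing to a motivic one, and hence to the injectivity statement on Chow groups.

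The main obstacle will be the refined Chow--Künneth step in the second paragraph: in the Calabi--Yau case $H^3$ has odd weight and a possibly non--trivial $H^{2,1}$ part, so extracting from $h(X)$ a truly ``minimal'' transcendental motive $t(X)$ that carries exactly the piece responsible for $H^{3,0}\oplus H^{0,3}$ is more delicate than in the K3 situation treated by Voisin and Pedrini. Once this extraction is carried out, the passage from cohomological vanishing to motivic vanishing, and then to the identities on $A^\ast$, is formal via Kimura--O'Sullivan.
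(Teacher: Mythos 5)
Your overall strategy --- finite--dimensionality via domination by a product of elliptic curves, a refined Chow--K\"unneth decomposition isolating a transcendental motive $t(X)$, and the nilpotence theorem to lift cohomological identities to identities on Chow groups --- is the same as the paper's. But there is a genuine gap at the step where you claim that a summand of $t(X)$ which does not absorb $H^{3,0}(X)$ has vanishing cohomological realization, and likewise where you claim that a correspondence acting as the identity on $H^{3,0}(X)$ acts as the identity on all of $t(X)$ in cohomology. The realization of $t(X)$ is $H^3_{\rm tr}(X)$, the orthogonal complement of $\wt{N}^1H^3(X)$, and (as you yourself note) this space may contain a transcendental part of $H^{2,1}\oplus H^{1,2}$. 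Knowing that a projector $v$ (or a correspondence $\Gamma-\Delta$) kills $H^{3,0}(X)$ only tells you that the orthogonal complement of its kernel lies in $\bigl\{a\in H^3(X)\ \vert\ a_{\C}\cdot\omega=0\bigr\}$, where $\omega$ generates $H^{0,3}(X)$. To conclude that this orthogonal complement is contained in $\wt{N}^1H^3(X)$ --- and hence that the kernel contains all of $H^3_{\rm tr}(X)$, which is what both the indecomposability and the Bloch--type statement require --- you need the \emph{generalized Hodge conjecture for $H^3(X)$}, i.e.\ that every sub--Hodge structure of level $\le 1$ in $H^3(X)$ is supported on a divisor. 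Without this input, a nontrivial level--one sub--Hodge structure of $H^3_{\rm tr}(X)$ could survive, giving a nontrivial submotive of $t(X)$ not containing $H^{3,0}$ whose realization is nonzero; finite--dimensionality alone cannot rule this out. You flag the ``refined Chow--K\"unneth step'' as the main obstacle, but the construction of $t(X)$ is unproblematic (it is Vial's, given $B(X)$ and finite--dimensionality); the real missing ingredient is the Hodge--theoretic one above. It is also the second place where the hypothesis of domination by a product of elliptic curves is used, since the generalized Hodge conjecture is known for such products \cite{Ab}, \cite{Sh} and is inherited in degree $3$ by the dominated threefold.

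Your treatment of Voisin's conjecture in the rigid case is essentially the paper's argument: for $k\ge 3$ the symmetric power of the two--dimensional odd--weight piece $H^3(X)$ vanishes in cohomology and the nilpotence theorem kills the corresponding projector in Chow groups; for $k=2$ one must additionally observe that the one--dimensional space $\hbox{Sym}^2H^3(X)\subset H^6(X^2)\cap F^3$ is spanned by the algebraic class $\Pi_3$, so that the Hodge conjecture holds for it and the same lifting applies. That part of your proposal is sound, modulo writing out the $k=2$ case explicitly.
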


One example where this applies is Beauville's threefold \cite{Beau0}; other examples are given below. The main tool used in this note is the theory of finite--dimensional motives of Kimura and O'Sullivan \cite{Kim}.

\vskip0.4cm

\begin{convention} All varieties will be projective irreducible varieties over $\C$.

For smooth $X$, we will denote by $A^j(X)$ the Chow group $CH^j(X)\otimes{\QQ}$ of codimension $j$ cycles under rational equivalence. The notations $A^j_{hom}(X)$ and $A^j_{AJ}(X)$ will denote the subgroup of homologically trivial and Abel--Jacobi trivial cycles respectively. $\MM_{\rm rat}$ will denote the (contravariant) category of Chow motives with $\QQ$--coefficients over $\C$.
For a smooth projective variety over $\C$, $h(X)=(X,\Delta_X,0)$ will denote its motive in $\MM_{\rm rat}$. $H^\ast(X)$ will denote singular cohomology with $\QQ$--coefficients.

%In this note, the word {\sl variety\/} will refer to a quasi--projective irreducible algebraic variety over $\C$, endowed with the Zariski topology. A {\sl subvariety\/} is a (possibly reducible) reduced subscheme which is equidimensional. 

%{\bf All Chow groups will be with rational coefficients}: we will denote by $A_jX$ the Chow group of $j$--dimensional cycles on $X$ with $\QQ$--coefficients; for $X$ smooth of dimension $n$ the notations $A_jX$ and $A^{n-j}X$ will be used interchangeably. 
%In the rare cases we will have something to say about Chow groups with integral coefficients, we will indicate this by writing $A^\ast X_{\ZZ}$.
%The notation $A^j_{hom}X$, resp. $A^j_{AJ}X$ will be used to indicate the subgroups of homologically trivial, resp. Abel--Jacobi trivial cycles.
%For a morphism $f\colon X\to Y$, we will write $\Gamma_f\in A_\ast(X\times Y)$ for the graph of $f$.

%To avoid heavy notation, if $\tau\colon Y\to X$ is a closed inclusion and $a\in A_iY$, we will frequently write $a\in A_iX$ to indicate the proper push--forward $\tau_\ast(a)$. Likewise, for any inclusion $Y\subset X$ and $b\in A^jX$ we will often write
%  \[   b\vert_{Y}\ \ \in A^jY\]
 % to indicate the cycle class $\tau^\ast(b)$.

%The Griffiths group $\grif^j$ is the group of codimension $j$ cycles that are homologically trivial modulo algebraic equivalence, again with $\QQ$--coefficients. 

%In an effort to lighten notation, we will write $H^j(X)$ to indicate singular cohomology $H^j(X,\QQ)$.
% (resp. Borel--Moore homology $H_j(X,\QQ)$).
\end{convention}

\section{Some Calabi--Yau threefolds}
\label{examples}

This section presents some examples of Calabi--Yau threefolds to which our arguments apply.

\begin{definition}[Calabi--Yau]\label{CY} In this note, a smooth projective variety $X$ of dimension $3$ is called a {\em Calabi--Yau threefold\/} if $h^{3,0}(X)=1$ and $h^{1,0}(X)=h^{2,0}(X)=0$.
\end{definition}

\begin{remark} Definition \ref{CY} is non--standard; usually, one requires that the canonical bundle is trivial. For the purposes of the present note, however, definition \ref{CY} suffices.
\end{remark}

\subsection{Rigid examples}
\label{rigid}

\begin{example}[Beauville \cite{Beau0}, Strominger--Witten \cite{SW}] Let $E$ be the Fermat elliptic curve, and let $\varphi\colon E\to E$ be the automorphism given by $(x,y,z)\mapsto (x,y, \zeta z)$, where $\zeta$ is a primitive third root of unity. Let 
  \[ \varphi_3=\varphi\times \varphi\times \varphi\colon\ \ E^3\ \to\ E^3\]
be the automorphism acting as $\varphi$ on each factor. Let $\wt{E^3}\to E^3$ denote the blow--up of the 27 fixed points of $\varphi^3$, and let
\[ \wt{\varphi_3}\colon\ \ \wt{E^3}\ \to\ \wt{E^3}\]
denote the automorphism induced by $\varphi^3$. The quotient
\[ Z:= \wt{E^3}/ \wt{\varphi_3}\]
is a smooth Calabi--Yau threefold, which is rigid (i.e. $h^{2,1}(Z)=0$).  
\end{example}

\begin{remark} The threefold $Z$ is relevant in string theory. Indeed, as explained in the nice article \cite{FG} (where $Z$ is studied in great detail), the rigidity of $Z$ posed a conundrum to physicists: the mirror of $Z$ cannot be a projective threefold ! This is discussed in \cite{CHSW}, and led to the subsequent development of a theory of generalized mirror symmetry \cite{CDP}.
\end{remark}

\begin{example}[\cite{GG2}, \cite{R}] The group $G=(\ZZ_3)^2=\langle\zeta\times\zeta\times\zeta, \zeta\times\zeta^2\times 1\rangle$ acts on $E^3$, and there exists a desingularization
  \[ Z_2\ \to\ E^3/G\]
  which is Calabi--Yau. The variety $Z_2$ is rigid \cite{R}.
  \end{example}
  
\subsection{More (not necessarily rigid) examples}

\begin{example}[Oguiso--Sakurai \cite{OS}] The varieties $X_{3,1}$ and $X_{3,2}$ constructed in \cite[Theorem 3.4]{OS} are Calabi--Yau threefolds, obtained as crepant resolutions of quotients $E^3/G$, where $E$ is an elliptic curve and $G\subset\hbox{Aut}(E^3)$ a certain group.\footnote{The definition of Calabi--Yau variety in \cite{OS} is different from ours, as it is not required that $h^{2,0}=0$; however (as noted in \cite[Section 4.1]{FG}, the varieties $X_{3,1}$ and $X_{3,2}$ do have $h^{2,0}=0$.}
\end{example}

\begin{example}[Borcea--Voisin]  Let $S$ be a $K3$ surface admitting a non--symplectic involution $\alpha$ which fixes $k= 10$ rational curves. Let $E$ be an elliptic curve, and let $\iota\colon E\to E$ be the involution $z\mapsto -z$. There exists a desingularization
  \[  X\ \to\ (S\times E)/(\alpha\times\iota)\ \]
  which is Calabi--Yau; it has $h^{2,1}(X)=11-k$ \cite{V20}, \cite{Bo}. 
  
To be sure, the Borcea--Voisin construction exists more generally for any $k\le 10$ \cite{V20}, \cite{Bo}; in this note, however, we only consider the extremal case $k=10$. In this easy case of the Borcea--Voisin construction, the $K3$ surface $S$ is rationally dominated by a product of elliptic curves. Also (as explained in \cite[2.4]{GG}), $X$ is birational to a double cover of $\PP^3$ branched along $8$ planes.
\end{example}
 
\begin{remark}\label{more} In \cite{CG}, the Borcea--Voisin construction is generalized, to include quotients of higher--order automorphisms of $S\times E$. In some cases, e.g. \cite[Table 2 lines 18 and 19]{CG}, the resulting Calabi--Yau threefold is rationally dominated by curves, and is rigid (cf. \cite[Remarks 6.3 and 6.5]{CG}).
\end{remark}

\section{Preliminaries}

\subsection{Standard conjecture $B(X)$}

Let $X$ be a smooth projective variety of dimension $n$, and $h\in H^2(X,\QQ)$ the class of an ample line bundle. The hard Lefschetz theorem asserts that the map
  \[  L^{n-i}\colon H^i(X,\QQ)\to H^{2n-i}(X,\QQ)\]
  obtained by cupping with $h^{n-i}$ is an isomorphism, for any $i< n$. One of the standard conjectures asserts that the inverse isomorphism is algebraic:

\begin{definition} Given a variety $X$, we say that $B(X)$ holds if for all ample $h$, and all $i<n$ the isomorphism 
  \[  (L^{n-i})^{-1}\colon 
  H^{2n-i}(X,\QQ)\stackrel{\cong}{\rightarrow} H^i(X,\QQ)\]
  is induced by a correspondence.
 \end{definition}  
 
 \begin{remark} It is known that $B(X)$ holds for the following varieties: curves, surfaces, abelian varieties \cite{K0}, \cite{K}, threefolds not of general type \cite{Tan}, hyperk\"ahler varieties of 
 $K3^{[n]}$--type \cite{ChM}, $n$--dimensional varieties $X$ which have $A_i(X)_{}$ supported on a subvariety of dimension $i+2$ for all $i\le{n-3\over 2}$ \cite[Theorem 7.1]{V}, $n$--dimensional varieties $X$ which have $H_i(X)=N^{\llcorner {i\over 2}\lrcorner}H_i(X)$ for all $i>n$ \cite[Theorem 4.2]{V2}, products and hyperplane sections of any of these \cite{K0}, \cite{K}.
 
 For smooth projective varieties over $\C$, the standard conjecture $B(X)$ implies the standard conjecture $D(X)$, i.e homological and numerical equivalence coincide on $X$ and 
 $X\times X$ \cite{K0}, \cite{K}. 
 \end{remark}

\subsection{Coniveau and niveau filtration}

\begin{definition}[Coniveau filtration \cite{BO}]\label{con} Let $X$ be a quasi--projective variety. The {\em coniveau filtration\/} on cohomology and on homology is defined as
  \[\begin{split}   N^c H^i(X,\QQ)&= \sum \ima\bigl( H^i_Y(X,\QQ)\to H^i(X,\QQ)\bigr)\ ;\\
                           N^c H_i(X,\QQ)&=\sum \ima \bigl( H_i(Z,\QQ)\to H_i(X,\QQ)\bigr)\ ,\\
                           \end{split}\]
   where $Y$ runs over codimension $\ge c$ subvarieties of $X$, and $Z$ over dimension $\le i-c$ subvarieties.
 \end{definition}

Vial introduced the following variant of the coniveau filtration:

\begin{definition}[Niveau filtration \cite{V4}] Let $X$ be a smooth projective variety. The {\em niveau filtration} on homology is defined as
  \[ \wt{N}^j H_i(X)=\sum_{\Gamma\in A_{i-j}(Z\times X)_{}} \ima\bigl( H_{i-2j}(Z)\to H_i(X)\bigr)\ ,\]
  where the union runs over all smooth projective varieties $Z$ of dimension $i-2j$, and all correspondences $\Gamma\in A_{i-j}(Z\times X)_{}$.
  The niveau filtration on cohomology is defined as
  \[   \wt{N}^c H^iX:=   \wt{N}^{c-i+n} H_{2n-i}X\ .\]
  
\end{definition}

\begin{remark}\label{is}
The niveau filtration is included in the coniveau filtration:
  \[ \wt{N}^j H^i(X)\subset N^j H^i(X)\ .\] 
  These two filtrations are expected to coincide; indeed, Vial shows this is true if and only if the Lefschetz standard conjecture is true for all varieties \cite[Proposition 1.1]{V4}. 
  
  Using the truth of the Lefschetz standard conjecture in degree $\le 1$, it can be checked \cite[page 6 "Properties"]{V4} that the two filtrations coincide in a certain range:
  \[  \wt{N}^j H^i(X)= N^j H^iX\ \ \ \hbox{for\ all\ }j\ge {i-1\over 2} \ .\]
  \end{remark}

\subsection{Finite--dimensional motives}

We refer to \cite{Kim}, \cite{An}, \cite{J4}, \cite{MNP} for basics on finite--dimensional motives. 
A crucial property is the nilpotence theorem, which allows to lift relations between cycles from homological to rational equivalence:

\begin{theorem}[Kimura \cite{Kim}]\label{nilp} Let $X$ be a smooth projective variety of dimension $n$ with finite--dimensional motive. Let $\Gamma\in A^n(X\times X)_{}$ be a correspondence which is numerically trivial. Then there is $N\in\NN$ such that
     \[ \Gamma^{\circ N}=0\ \ \ \ \in A^n(X\times X)_{}\ .\]
\end{theorem}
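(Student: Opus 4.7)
The statement is Kimura's original nilpotence theorem \cite{Kim}, and I would follow his proof. The starting point is to invoke the defining property of finite--dimensionality: there is a decomposition
\[ h(X)\ =\ M^+\oplus M^-\ \ \ \hbox{in\ }\MM_{\rm rat}\ ,\]
with $\hbox{Sym}^{k+1}M^+=0$ and $\wedge^{\ell+1}M^-=0$ for suitable $k,\ell\in\NN$. Accordingly $\Gamma$ splits as a $2\times 2$ matrix of components $\Gamma^{ij}\in\hbox{Hom}(M^j,M^i)$, each of which inherits the numerical triviality.

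The heart of the matter is the following key lemma: a numerically trivial endomorphism $f$ of a purely evenly (resp. oddly) finite--dimensional motive $M$ is nilpotent. I would establish this via a motivic Cayley--Hamilton identity. Working in the odd case, the vanishing $\wedge^{\ell+1}M=0$ translates, via the antisymmetrizer projector acting on $M^{\otimes(\ell+1)}$, into a universal polynomial identity
\[ \sum_{i=0}^{\ell+1} (-1)^i\,e_i(f)\cdot f^{\ell+1-i}\ =\ 0\ \ \ \hbox{in\ }\hbox{End}(M)\ ,\]
with scalar coefficients $e_i(f):=\hbox{tr}(\wedge^i f)\in\QQ$. Numerical triviality of $f$ means precisely that $\hbox{tr}(g\circ f)=0$ for every $g\in\hbox{End}(M)$; specialising $g=f^{j-1}$ gives $\hbox{tr}(f^j)=0$ for all $j\ge 1$, and Newton--Girard (valid in characteristic $0$) then forces every $e_i(f)=0$. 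The identity collapses to $f^{\ell+1}=0$, and the purely even case is identical with $\hbox{Sym}$ in place of $\wedge$.

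Once both diagonal blocks $\Gamma^{++}$ and $\Gamma^{--}$ are nilpotent, nilpotence of the full $\Gamma$ follows: any sufficiently long composition $\Gamma^{\circ N}$ must contain long consecutive strings of factors landing in a single summand, which are annihilated by the diagonal nilpotence. The principal obstacle is lifting the classical Newton--Girard identities from matrices to morphisms in the tensor category $\MM_{\rm rat}$: one must verify that the universal polynomial relations among traces of $\wedge^i f$, $\hbox{Sym}^i f$ and $f^j$ really do hold as morphism identities in any $\QQ$--linear pseudo--abelian tensor category. This step---the extraction of the Cayley--Hamilton identity from the combinatorics of Young symmetrizers---is the technical core of Kimura's paper and where the real work lies.
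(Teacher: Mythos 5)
The paper offers no proof of this statement: it is imported verbatim from Kimura \cite{Kim} (see also \cite{An}, \cite{J4}, \cite{MNP}), so your attempt can only be measured against Kimura's original argument, which you are visibly reconstructing. Your two main steps are the right ones: the splitting $h(X)=M^+\oplus M^-$ with $\hbox{Sym}^{k+1}M^+=0$ and $\wedge^{\ell+1}M^-=0$, and the key lemma that a numerically trivial endomorphism of a purely evenly (resp.\ oddly) finite--dimensional motive is nilpotent of bounded index, obtained by deriving the categorical Cayley--Hamilton identity from the vanishing of the (anti)symmetrizer on $M^{\otimes(\ell+1)}$ and then killing the coefficients $e_i(f)=\hbox{tr}(\wedge^i f)$ via $\hbox{tr}(f^{\circ j})=0$ and Newton--Girard in characteristic $0$. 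This is exactly the route of \cite{Kim}, and your identification of the tensor--categorical Cayley--Hamilton identity as the technical core is accurate.

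The genuine gap is in the final assembly. The claim that any sufficiently long composition $\Gamma^{\circ N}$ ``must contain long consecutive strings of factors landing in a single summand, which are annihilated by the diagonal nilpotence'' is false as stated: the alternating words $(\Gamma^{+-}\circ\Gamma^{-+})^{\circ m}$ have length $2m$ and contain no two consecutive diagonal factors, so nilpotence of $\Gamma^{++}$ and $\Gamma^{--}$ alone kills nothing here. Worse, even after you add the (correct and necessary) observation that $\Gamma^{+-}\circ\Gamma^{-+}$ and $\Gamma^{-+}\circ\Gamma^{+-}$ are numerically trivial endomorphisms of $M^+$ and $M^-$ and hence nilpotent, you are not done: expanding $\Gamma^{\circ N}$ produces composable words such as $\Gamma^{++}\Gamma^{+-}\Gamma^{--}\Gamma^{-+}\Gamma^{++}\Gamma^{++}\Gamma^{+-}\cdots$ whose successive ``excursions'' from $M^+$ back to $M^+$ are \emph{pairwise distinct} numerically trivial endomorphisms $g_1,\dots,g_r$ of $M^+$; each $g_i$ is nilpotent of index $\le k+1$, but a product of distinct nilpotents need not vanish. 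The standard repair is to upgrade ``every element of the ideal $I^+=\ker\bigl(\hbox{End}(M^+)\to\hbox{End}_{\rm num}(M^+)\bigr)$ is nilpotent of index $\le k+1$'' to ``$I^+$ is a nilpotent ideal'' via the Nagata--Higman theorem (valid over $\QQ$), and likewise for $M^-$; once $(I^+)^{c}=(I^-)^{c}=0$ for some $c$, every composable word of length $\ge 2c+2$ vanishes, because its state sequence must revisit one of the two summands at least $c+1$ times. Without Nagata--Higman (or an equivalent multilinearized Cayley--Hamilton argument), the gluing step does not close.
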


% Actually, the nilpotence property (for all powers of $X$) could serve as an alternative definition of finite--dimensional motive, as shown by a result of Jannsen \cite[Corollary 3.9]{J4}.
   Conjecturally, any variety has finite--dimensional motive \cite{Kim}. We are still far from knowing this, but at least there are quite a few non--trivial examples:
 
\begin{remark} 
The following varieties have finite--dimensional motive: abelian varieties, varieties dominated by products of curves \cite{Kim}, $K3$ surfaces with Picard number $19$ or $20$ \cite{P}, surfaces not of general type with $p_g=0$ \cite[Theorem 2.11]{GP}, many examples of surfaces of general type with $p_g=0$ \cite{PW}, \cite{V8}, 
%Hilbert schemes of surfaces known to have finite--dimensional motive \cite{CM}, 
generalized Kummer varieties \cite[Remark 2.9(\rom2)]{Xu},
 3--folds and 4--folds with nef tangent bundle \cite{Iy}, \cite{Iy2}, 
 %log--homogeneous varieties in the sense of \cite{Br} (this follows from \cite[Theorem 4.4]{Iy2}), certain 3--folds of general type \cite[Section 8]{V5}, 
 varieties of dimension $\le 3$ rationally dominated by products of curves \cite[Example 3.15]{V3}, varieties $X$ with $A^i_{AJ}X_{}=0$ for all $i$ \cite[Theorem 4]{V2} (in particular, Fano 3--folds \cite{GoGu}), products of varieties with finite--dimensional motive \cite{Kim}.
\end{remark}

\begin{remark}
It is worth pointing out that up till now, all examples of finite-dimensional motives happen to be in the tensor subcategory generated by Chow motives of curves. On the other hand, ``many'' motives are known to lie outside this subcategory, e.g. the motive of a general hypersurface in $\PP^3$ \cite[Remark 2.34]{Ay}.
\end{remark}

\section{Bloch conjecture for some Calabi--Yau threefolds}

\begin{definition} Let $X$ be a Calabi--Yau threefold. A correspondence $\Gamma\in A^3(X\times X)$ is called {\em symplectic\/} if
  \[  \Gamma_\ast=\hbox{id}\colon\ \ H^{0,3}(X)\ \to\ H^{0,3}(X)\ .\]
  \end{definition}

\begin{theorem}\label{main1} Let $X$ be a Calabi--Yau threefold. Assume moreover

\noindent
{(\rom1)} $X$ has finite--dimensional motive;

\noindent
(\rom2) $B(X)$ is true;

\noindent
(\rom3) the generalized Hodge conjecture is true for $H^3(X)$.

Let $\Gamma\in A^3(X\times X)$ be a symplectic correspondence. Then
  \[ \Gamma_\ast\colon\ \ A^3_{hom}X\ \to\ A^3_{hom}X\ \]
  is an isomorphism.
  \end{theorem}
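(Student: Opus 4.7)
The plan is to reduce the statement to a finite--dimensionality argument applied to a small transcendental summand of $h^3(X)$, after showing that this summand controls all of $A^3_{hom}(X)$. Using assumptions (i) and (ii), one obtains a Chow--K\"unneth decomposition $\{\pi_i\}_{i=0}^6$ of $h(X)$. Since $h^{1,0}(X)=0$, $\pi_1=\pi_5=0$, and since $H^2(X)$ and $H^4(X)$ are purely algebraic (as $h^{2,0}=0$), one may take $\pi_2$ and $\pi_4$ supported on products curve$\times$divisor. A direct intersection calculation (a generic $0$--cycle on a threefold misses any curve or divisor in the relevant codimension) shows $(\pi_i)_\ast A^3_{hom}(X)=0$ for $i\neq 3$, so $A^3_{hom}(X)=(\pi_3)_\ast A^3(X)$.

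Next, using (iii), refine $\pi_3=\pi_3^{tr}+\pi_3^{alg}$. The generalized Hodge conjecture for $H^3(X)$ supplies a smooth projective curve $C$ and a correspondence $\gamma\in A^2(C\times X)$ with $\gamma_\ast H^1(C)=N^1 H^3(X)$, the maximal coniveau--$1$ sub--Hodge structure. Combined with $B(X)$, this yields a cohomological idempotent onto $N^1 H^3(X)$ that factors through $h^1(C)(-1)$; by finite--dimensionality and the standard nilpotent--idempotent lift it extends to a Chow idempotent $\pi_3^{alg}$, and we set $\pi_3^{tr}:=\pi_3-\pi_3^{alg}$. The summand $t(X):=(X,\pi_3^{tr},0)$ has cohomology $H^{3,0}(X)\oplus H^{0,3}(X)$. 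The algebraic piece contributes nothing to $A^3$:
\[
(\pi_3^{alg})_\ast A^3(X)\ \subseteq\ A^3\bigl(h^1(C)(-1)\bigr)\ =\ A^2\bigl(h^1(C)\bigr)\ \subseteq\ A^2(C)\ =\ 0,
\]
hence $A^3_{hom}(X)=A^3(t(X))$.

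The final step is to apply Kimura nilpotence to $t(X)$. The symplectic hypothesis gives $\Gamma_\ast=\hbox{id}$ on $H^{0,3}(X)$; since $\Gamma$ is $\QQ$--rational, its action commutes with complex conjugation on $H^3(X,\C)$, so also $\Gamma_\ast=\hbox{id}$ on $H^{3,0}(X)=\overline{H^{0,3}(X)}$. Thus $\pi_3^{tr}\circ\Gamma\circ\pi_3^{tr}-\pi_3^{tr}\in\mathrm{End}(t(X))$ is homologically trivial, and Theorem \ref{nilp} makes it nilpotent modulo rational equivalence. Therefore $\pi_3^{tr}\circ\Gamma\circ\pi_3^{tr}$ acts on $A^3(t(X))=A^3_{hom}(X)$ as identity plus nilpotent, hence as an isomorphism. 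The main obstacle is the middle paragraph: producing $\pi_3^{alg}$ as a genuine Chow idempotent factoring through $h^1(C)(-1)$ uses all three hypotheses simultaneously (GHC for the curve, $B(X)$ to obtain the algebraic sections of Lefschetz needed to separate Hodge pieces, and finite--dimensionality to lift cohomological idempotents to rational equivalence), which is where the heavy lifting lies.
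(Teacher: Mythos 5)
Your overall architecture matches the paper's: isolate a transcendental summand of $h^3(X)$ via a refined Chow--K\"unneth decomposition, check that the remaining pieces do not act on $A^3_{hom}(X)$, and finish with Kimura nilpotence. The support arguments and the reduction $A^3_{hom}(X)=A^3(t(X))$ are fine. The gap is in the final paragraph, in the claim that $t(X)=(X,\pi_3^{tr},0)$ has cohomology $H^{3,0}(X)\oplus H^{0,3}(X)$. Since $\pi_3^{tr}$ is a correspondence with $\QQ$--coefficients, its image in $H^3$ is a \emph{rational} sub-Hodge structure, namely the orthogonal complement $H^3_{\rm tr}(X)$ of the maximal coniveau--$1$ piece $N^1H^3(X)$; this is the smallest rational sub-Hodge structure whose complexification contains $H^{3,0}$, and in general it also carries a nonzero $(2,1)+(1,2)$ part. (For instance, if $H^3(X,\QQ)$ is an irreducible Hodge structure with $h^{2,1}>0$, then $N^1H^3(X)=0$, hypothesis (\rom3) holds vacuously, and $H^3_{\rm tr}(X)=H^3(X)$ has dimension $2+2h^{2,1}$.) Consequently, knowing $\Gamma_\ast=\hbox{id}$ on $H^{0,3}$ and, by conjugation, on $H^{3,0}$ does \emph{not} give that $\pi_3^{tr}\circ(\Gamma-\Delta)\circ\pi_3^{tr}$ is homologically trivial: you have no control over the action of $\Gamma$ on the $(2,1)$ part of $H^3_{\rm tr}(X)$, and that is exactly where the cycle you feed to the nilpotence theorem could fail to be homologically trivial.

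The missing step is the paper's Lemma \ref{trans}. Set $K:=\ker\bigl((\Gamma-\Delta)_\ast\colon H^3(X,\QQ)\to H^3(X,\QQ)\bigr)$. This is a rational subspace with $H^{0,3}(X)\subset K_\C$, so every class in $K^\perp$ (orthogonal complement for the cup-product pairing) annihilates a generator $\omega\in H^{0,3}(X)$ under cup product; by the generalized Hodge conjecture together with Remark \ref{is}, the space $\bigl\{a\in H^3(X,\QQ)\ \vert\ a_\C\cdot\omega=0\bigr\}$ equals $\wt{N}^1H^3(X)$, whence $K^\perp\subset\wt{N}^1H^3(X)$ and therefore $K\supset\wt{N}^1H^3(X)^\perp=H^3_{\rm tr}(X)$. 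It is the rationality of $K$ and the duality induced by the intersection pairing --- not complex conjugation --- that propagates the identity from $H^{0,3}$ to all of $H^3_{\rm tr}(X)$, and this is where hypothesis (\rom3) actually enters. With this lemma inserted in place of your assertion about $H^\ast(t(X))$, the rest of your argument goes through and essentially recovers the paper's proof.
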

  
 \begin{remark} In case $X$ is not of general type (i.e., if we adhere to the usual definition of Calabi--Yau varieties), hypothesis (\rom2) is always fulfilled \cite{Tan}.
 \end{remark}

 \begin{proof} 
 %Since $X$ is not of general type, it is known that $B(X)$ holds \cite{Tan}. 
 Hypotheses (\rom1) and (\rom2) ensure the existence of a refined Chow--K\"unneth decomposition $\Pi_{i,j}$ as in \cite{V4}. There is a splitting 
   \[ H^3(X)= H^3_{\rm tr}(X)\oplus \wt{N}^1 H^3(X)\ ,\]
   where the ``transcendental cohomology'' $H^3_{\rm tr}(X)$ is defined as
   \[ H^3_{\rm tr}(X):=(\Pi_{3,0})_\ast H^3(X)\ \subset\ H^3(X)\ .\]
   
   Hypothesis (\rom3) implies that 
   \[  \bigl((\Gamma-\Delta)\circ \Pi_{3,0}\bigr)_\ast H^3(X)=0\ ,\]
   in view of lemma \ref{trans} below. This means that
  \[  \Gamma -\Delta = (\Gamma-\Delta)\circ ({\displaystyle \sum_{(i,j)\not=(0,3)} \Pi_{i,j}})\ \ \hbox{in\ } H^6(X\times X)\ .\]
  By construction of the $\Pi_{i,j}$, this implies
  \[ \Gamma-\Delta= R_0+R_1+R_2\ \ \hbox{in\ } H^6(X\times X)\ ,\]
  where $R_0, R_1, R_2$ are cycles supported on $(\hbox{point})\times X$, resp. on $(\hbox{divisor})\times (\hbox{divisor})$, resp. on $X\times(\hbox{point})$. That is, the cycle
  \[ \Gamma-\Delta-R_0-R_1-R_2\ \ \in A^3(X\times X)\]
  is homologically trivial. Applying the nilpotence theorem, and noting that the $R_\ell$ do not act on $A^3_{hom}X$, it follows that there exists $N\in\NN$ such that
    \[ (\Gamma^{\circ N})_\ast=\hbox{id}\colon\ \ A^3_{hom}X\ \to\ A^3_{hom}X\ .\]
    In particular, 
    \[ \Gamma_\ast\colon\ \ A^3_{hom}X\ \to\ A^3_{hom}X\ \]
is injective and surjective.

\begin{lemma}\label{trans} Let $X$ be a Calabi--Yau threefold, and assume the generalized Hodge conjecture is true for $H^3(X)$. Let $\Gamma \in A^3(X\times X)$ be a symplectic correspondence. Then
  \[ \Gamma_\ast=\hbox{id}\colon\ \ H^3_{\rm tr}(X)\ \to\ H^3_{\rm tr}(X)\ .\]
  \end{lemma}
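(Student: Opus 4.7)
The plan is to pin down the Hodge structure on $H^3_{\rm tr}(X)$ and then observe that $\Gamma_\ast-\hbox{id}$ is a morphism of rational Hodge structures that the generalized Hodge conjecture will force to vanish on this transcendental piece.

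The first step is to locate $H^{3,0}$ and $H^{0,3}$ inside $H^3_{\rm tr}(X)$. The refined Chow--K\"unneth decomposition of \cite{V4}, available under hypotheses (\rom1) and (\rom2), yields a direct sum of rational sub-Hodge structures $H^3(X,\QQ)=H^3_{\rm tr}(X)\oplus\wt{N}^1 H^3(X)$. Remark \ref{is} (with $i=3$, $j=1$, where $j\ge(i-1)/2$ holds with equality) identifies $\wt{N}^1 H^3(X)$ with $N^1 H^3(X)$. Moreover, any rational sub-Hodge structure of $H^3(X)$ whose complexification lies in $F^1$ must, by complex conjugation symmetry, already lie in $F^1\cap\overline{F^1}=H^{2,1}\oplus H^{1,2}$, so $N^1 H^3(X)\otimes\C\subset H^{2,1}\oplus H^{1,2}$. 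In particular, both $H^{3,0}(X)$ and $H^{0,3}(X)$ are contained in $H^3_{\rm tr}(X)\otimes\C$.

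The second step is a symmetry argument. Since $\Gamma$ is algebraic, $\Gamma_\ast\colon H^3(X)\to H^3(X)$ is a morphism of Hodge structures and therefore respects the Hodge decomposition. By hypothesis $\Gamma_\ast$ is the identity on $H^{0,3}(X)$; since $\Gamma_\ast$ commutes with complex conjugation (being defined over $\QQ$), it is also the identity on $H^{3,0}(X)$. Setting $W:=\ima\bigl(\Gamma_\ast-\hbox{id}\colon H^3_{\rm tr}(X)\to H^3_{\rm tr}(X)\bigr)$, the rational sub-Hodge structure $W$ has complexification contained in $H^{2,1}\oplus H^{1,2}$.

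Finally, I would invoke the generalized Hodge conjecture for $H^3(X)$, which in view of the conjugation argument above says that every rational sub-Hodge structure of $H^3(X)$ of type $\{(2,1),(1,2)\}$ is contained in $N^1 H^3(X)$. Hence $W\subset N^1 H^3(X)\cap H^3_{\rm tr}(X)=0$, giving $\Gamma_\ast=\hbox{id}$ on $H^3_{\rm tr}(X)$. The only mildly delicate point is the first step: one must know that the projector $\Pi_{3,0}$ really cuts out a rational sub-Hodge structure complementary to $\wt{N}^1 H^3(X)$, so that the Hodge-theoretic complement computation is legitimate. Everything else is then a short linear-algebra check.
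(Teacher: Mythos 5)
Your argument is in substance dual to the paper's, and most of it is sound. Where you apply the generalized Hodge conjecture to the \emph{image} of $\Gamma_\ast-\hbox{id}$ on $H^3_{\rm tr}(X)$ (a rational sub-Hodge structure of type $\{(2,1),(1,2)\}$, hence contained in $N^1H^3(X)$), the paper applies it to the \emph{kernel}: it uses GHC and Remark \ref{is} to identify $\wt{N}^1H^3(X)=N^1H^3(X)=\{a\in H^3(X)\mid a_{\C}\cdot\omega=0\}$ for $\omega$ a generator of $H^{0,3}(X)$, notes that $K:=\ker((\Gamma-\Delta)_\ast)$ contains $H^{0,3}(X)$ after complexification, deduces $K^\perp\subset\wt{N}^1H^3(X)$, and concludes $K\supset(\wt{N}^1H^3(X))^\perp=H^3_{\rm tr}(X)$ by biduality for the nondegenerate cup--product pairing. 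Your conjugation argument placing $H^{3,0}\oplus H^{0,3}$ inside $H^3_{\rm tr}(X)_{\C}$ and forcing $\Gamma_\ast=\hbox{id}$ on $H^{3,0}$, and the formulation of GHC you invoke, are both correct and consistent with what the paper uses.

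There is, however, one unjustified step that matters. Writing $W:=\ima\bigl(\Gamma_\ast-\hbox{id}\colon H^3_{\rm tr}(X)\to H^3_{\rm tr}(X)\bigr)$ presupposes that $\Gamma_\ast$ preserves $H^3_{\rm tr}(X)$, which is not automatic for an arbitrary correspondence. Without it your argument only yields $(\Gamma_\ast-\hbox{id})\bigl(H^3_{\rm tr}(X)\bigr)\subset N^1H^3(X)$, i.e. $\Gamma_\ast=\hbox{id}$ only modulo $\wt{N}^1H^3(X)$; that is weaker than what Theorem \ref{main1} needs, namely $\bigl((\Gamma-\Delta)\circ\Pi_{3,0}\bigr)_\ast H^3(X)=0$ in $H^3(X)$ on the nose. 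The gap is repairable: correspondences respect the niveau filtration (composing ${}^t\Gamma$ with the defining correspondences from curves shows $({}^t\Gamma)_\ast\wt{N}^1H^3(X)\subset\wt{N}^1H^3(X)$), and by adjunction for the cup--product pairing $\Gamma_\ast$ then preserves $(\wt{N}^1H^3(X))^\perp=H^3_{\rm tr}(X)$. Alternatively, run the paper's kernel/orthogonal--complement version, which never requires $\Gamma_\ast$-equivariance of the splitting. The point you flag as delicate --- that $\Pi_{3,0}$ cuts out a rational sub-Hodge structure complementary to $\wt{N}^1H^3(X)$ --- is indeed part of Vial's construction and is fine to take as given.
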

  
 \begin{proof} The intersection pairing on $H^3(X)$ respects the decomposition
    \[  H^3(X)= H^3_{\rm tr}(X)\oplus \wt{N}^1 H^3(X)\ ,\]
i.e. restriction induces a non--degenerate pairing
  \[ \begin{split} H^3_{\rm tr}(X)\otimes H^3_{\rm tr}(X) \ &\to\ H^6(X)\ ,\\
                  %   \wt{N}^1 H^3(X)\otimes \wt{N}^1 H^3(X) \ &\to\ H^6(X)\ .\\  
                \end{split}\]
 and hence $H^3_{\rm tr}(X)$ and $\wt{N}^1 H^3(X)$ are orthogonal with respect to the intersection pairing.
                
  Let $\omega\in H^{0,3}(X)$ be a generator. By the truth of the generalized Hodge conjecture and remark \ref{is}, we have 
  \[  \wt{N}^1 H^3(X)=N^1 H^3(X)=\bigl\{  a\in H^3(X)\ \vert\ a_{\C} \cdot \omega=0\bigr\}\ . \]                
  
  Let $K\subset H^3(X)$ denote the kernel
     \[K:= \ker \Bigl( (\Gamma-\Delta)_\ast\colon H^3(X)\ \to\ H^3(X)\Bigr)\ .\]
     Since the correspondence $\Gamma$ is symplectic, we have (by definition)
    \[ H^{0,3}(X)\ \subset\ K_{\C}:=\ker \Bigl( (\Gamma-\Delta)_\ast\colon H^3(X,\C)\ \to\ H^3(X,\C)\Bigr)\ .\]
    But then,
    \[  K^\perp \ \subset\  \bigl\{  a\in H^3(X)\ \vert\ a_{\C} \cdot \omega=0\bigr\}=\wt{N}^1 H^3(X)\  \]
    (here ${}^\perp$ denotes the orthogonal complement with respect to the intersection pairing on $H^3(X)$).   
   This implies
   \[ K\ \supset\ \wt{N}^1 H^3(X)^\perp=H^3_{\rm tr}(X)\ .\]   
  \end{proof}
   \end{proof}        
   
 \begin{remark} Lemma \ref{trans} is inspired by the analogous result for $K3$ surfaces, which can be found in \cite[Proof of Corollary 3.11]{V8} or \cite[Lemma 2.5]{Ped}.
 \end{remark}  
  
\begin{remark} As for examples which satisfy the hypotheses of theorem \ref{main1}, all the examples of section \ref{examples} will do. Indeed, all examples in section \ref{examples} are rationally dominated by products of elliptic curves. As such, they have finite--dimensional motive and $B(X)$ is true. The generalized Hodge conjecture is true for products of elliptic curves \cite[Theorem 6.1]{Ab} (NB: for products of Fermat curves, it suffices to refer to \cite{Sh}); any blow--up of $E_1\times E_2\times E_3$ still satisfies the generalized Hodge conjecture in degree $3$, hence so do the Calabi--Yau varieties of section \ref{examples}, as they are dominated by such a blow--up.
\end{remark}  
          
 %\begin{remark} For varieties $X$ of dimension $n=4$ or $5$ satisfying $B(X)$, one still has the refined Chow--K\"unneth decomposition $\pi_{i,j}$ of \cite[Theorem 1]{V4}, so one can define
 %  \[  H^n_{\rm tr}(X):= (\pi_{n,0})_\ast H^n(X)\ .\]
  % However, in order to prove lemma \ref{trans} one would need a strong form of the generalized Hodge conjecture, stating that sub--Hodge structures of $H^n(X)\cap F^1$ are contained in $\wt{N}^1 H^n(X)$. 
   
%(NB: this strong form of the generalized Hodge conjecture is established in \cite[Theorem 2.15]{V4} for the Fermat fourfold of degree $7$; it would be interesting to extend this to Fermat fourfolds and fivefolds which are Calabi--Yau...)
  % \end{remark}

\section{Indecomposability}

\begin{definition} Let $X$ be a smooth projective variety of dimension $n\le 5$. Assume $B(X)$ holds and $X$ has finite--dimensional motive. Then we define the ``transcendental motive'' $t(X)$
as
  \[ t(X):=(X,\Pi_{n,0},0)\ \ \in \MM_{\rm rat}\ ,\]
  where $\Pi_{n,0}$ is the refined Chow--K\"unneth projector constructed by Vial \cite[Theorem 2]{V4}.
  \end{definition}
  
\begin{remark} The fact that $t(X)$ is well--defined up to isomorphism follows from \cite[Theorem 7.7.3]{KMP} and \cite[Proposition 1.8]{V4}.
In case $n=2$, $t(X)$ coincides with the ``transcendental part'' $t_2(X)$ constructed for any surface in \cite{KMP}.
\end{remark}

\begin{theorem}\label{main2} Let $X$ be a Calabi--Yau threefold. Assume moreover

\noindent
{(\rom1)} $X$ has finite--dimensional motive;

\noindent
(\rom2) $B(X)$ is true;

\noindent
(\rom3) the generalized Hodge conjecture is true for $H^3(X)$.

Then $t(X)$ is indecomposable: any non--zero submotive of $t(X)$ coincides with $t(X)$.
\end{theorem}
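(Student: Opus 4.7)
The plan is to reduce the question to a simplicity property of the Hodge structure on $H^3_{\rm tr}(X)$, and then invoke the nilpotence theorem (Theorem \ref{nilp}) to lift that property from homological to rational equivalence. A submotive of $t(X)$ is given by an idempotent $p\in A^3(X\times X)$ with $p\circ \Pi_{3,0}=\Pi_{3,0}\circ p=p$; it is zero iff $p=0$, and I will show that if it is non-zero then in fact $p=\Pi_{3,0}$, so that the submotive is all of $t(X)$.

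First I would analyse the realization $H^3_{\rm tr}(X)$. Exactly as in the proof of Lemma \ref{trans}, hypothesis (\rom3) together with Remark \ref{is} gives
\[
\widetilde{N}^1 H^3(X)_{\C}\ =\ \bigl\{a\in H^3(X,\C)\,:\, a\cdot\omega=0\bigr\}\ =\ H^{2,1}(X)\oplus H^{1,2}(X),
\]
so that $H^3_{\rm tr}(X)_{\C}=H^{3,0}(X)\oplus H^{0,3}(X)$, a two-dimensional $\Q$-Hodge structure of type $(3,0)+(0,3)$. This Hodge structure is simple: any $\Q$-sub-Hodge structure $V\subset H^3_{\rm tr}(X)$ satisfies $V_{\C}\subset H^{3,0}\oplus H^{0,3}$ and is stable under complex conjugation; since the two summands are one-dimensional and exchanged by conjugation, $V$ must be $0$ or all of $H^3_{\rm tr}(X)$.

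Now let $p\in A^3(X\times X)$ be an idempotent with $p\circ\Pi_{3,0}=\Pi_{3,0}\circ p=p$, defining a non-zero submotive. The induced map $p_\ast$ is an idempotent endomorphism of the simple Hodge structure $H^3_{\rm tr}(X)$, so it is either $0$ or the identity on $H^3_{\rm tr}(X)$. If $p_\ast=0$, then since $p$ already factors through $\Pi_{3,0}$ (which acts as zero on the complement of $H^3_{\rm tr}(X)$), the correspondence $p$ is homologically trivial; by Theorem \ref{nilp} applied under hypothesis (\rom1), some power $p^{\circ N}=0$, and idempotency forces $p=0$, contradicting non-triviality.

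Therefore $p_\ast$ is the identity on $H^3_{\rm tr}(X)$, equivalently $(\Pi_{3,0}-p)_\ast=0$ on all of $H^3(X)$. A direct check shows $\Pi_{3,0}-p$ is again idempotent (using $p\circ\Pi_{3,0}=\Pi_{3,0}\circ p=p$ and $\Pi_{3,0}^2=\Pi_{3,0}$). Being homologically trivial and idempotent, the nilpotence theorem yields $\Pi_{3,0}-p=0$, i.e.\ $p=\Pi_{3,0}$, and the submotive coincides with $t(X)$. The main conceptual step is the simplicity of $H^3_{\rm tr}(X)$ as a $\Q$-Hodge structure; once that is in place (and it comes cheaply from GHC plus $h^{3,0}=1$), the rest is a formal application of nilpotence that is by now standard for finite-dimensional motives.
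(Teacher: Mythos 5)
Your argument is correct and follows essentially the same route as the paper: both reduce the statement to the fact that $H^3_{\rm tr}(X)$ carries no non-trivial sub-Hodge structure (via the GHC identification of $\widetilde{N}^1H^3(X)$ from Lemma \ref{trans}) and then lift to rational equivalence with the nilpotence theorem, your explicit observation that $H^3_{\rm tr}(X)$ is two-dimensional of type $(3,0)+(0,3)$ and hence simple being just a cleaner packaging of the paper's orthogonality argument, and your idempotent manipulations with $p$ and $\Pi_{3,0}-p$ being exactly what the paper's ``using finite-dimensionality'' steps amount to. One small slip: the set $\bigl\{a\in H^3(X,\C)\,:\,a\cdot\omega=0\bigr\}$ as you have written it also contains $H^{0,3}(X)$, so the middle equality in your display is literally false; you must restrict to rational classes (which are stable under conjugation) before complexifying, exactly as in the paper's Lemma \ref{trans}, in order to land in $H^{2,1}(X)\oplus H^{1,2}(X)$.
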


\begin{proof} Suppose $V=(X,v,0)\subset t(X)$ is a submotive which is not the whole motive $t(X)$. Then in particular,
   \[ H^3(V)\ \subsetneqq\ H^3(t(X))=H^3_{\rm tr}(X)\ .\]
   (Indeed, suppose we have equality. Then $V=t(X)$ in $\MM_{\rm hom}$, and using finite--dimensionality this implies $V=t(X)$ in $\MM_{\rm rat}$, contradiction.)
 But $H^3_{\rm tr}(X)$ does not have non--trivial sub-Hodge structures: indeed, suppose $H^3(V,\C)$ contains $H^{3,0}(X)$. Then 
   \[ (v-c\Delta)_\ast H^{3,0}(X)=0\ ,\]
   for some non--zero $c\in \QQ$. But then, as in the proof of lemma \ref{trans},  
   \[  \Bigl(\ker\bigl( (v-c\Delta)\vert_{H^3}\bigr)\Bigr)^\perp\ \subset\ \wt{N}^1 H^3(X)\ ,\]
   whence
   \[   \ker\bigl( (v-c\Delta)\vert_{H^3}\bigr)\ \supset\ \wt{N}^1 H^3(X)^\perp= H^3_{\rm tr}(X)\ ;\]
   this is absurd as it contradicts the fact that $H^3(V)\not=H^3_{\rm tr}(X)$. Suppose next that $H^3(V,\C)$ does {\em not\/} contain $H^{3,0}(X)$, i.e. $v_\ast H^{3,0}(X)=0$. Then, again as in the proof of lemma \ref{trans}, we find that
   \[ \Bigl( \ker\bigl( v\vert_{H^3}\bigr)\Bigr)^\perp\ \subset\ \wt{N}^1 H^3(X)\ ,\]
   whence
   \[   \ker\bigl( v\vert_{H^3}\bigr)\ \supset\ \wt{N}^1 H^3(X)^\perp= H^3_{\rm tr}(X)\ ;\]  
   it follows that $H^\ast(V)=0$ and so (using finite--dimensionality) $V=0$ in $\MM_{\rm rat}$.    
\end{proof}

%\begin{remark} For $K3$ surfaces $X$ with finite--dimensional motive, Voisin proves that the transcendental part $t_2(X)$ is indecomposable \cite[Corollary 3.11]{V8}. For $K3$ surfaces such that every correspondence has a valence, Pedrini likewise proves indecomposability of $t_2(X)$ \cite[Corollary 2.10]{Ped}. Both results actually apply more generally to surfaces with $p_g=1$.
%\end{remark}

\begin{corollary}\label{aut} Let $X$ be as in theorem \ref{main2}. Let $G\subset\hbox{Aut}(X)$ be a finite group of finite order automorphisms.

\noindent
(\rom1) If $g\in G$ is symplectic, then
 % \[ g_\ast=\hbox{id}\colon\ \ A^3_{hom}(X)\ \to\ A^3_{hom}(X)\ ,\]
%  and hence
  \[   A^3_{hom}(X)=A^3_{hom}(Y)\ ,\]
  where $Y$ denotes a resolution of singularities of the quotient $X/G$.
  
 \noindent
 (\rom2) If $g\in G$ is not symplectic, then 
  \[ A^3_{hom}(Y)=0\ .\]
\end{corollary}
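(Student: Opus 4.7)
The plan is to transfer the problem from the quotient $Y$ to the smooth Calabi--Yau threefold $X$, using the averaging idempotent
\[ p\ :=\ \frac{1}{|G|}\sum_{g\in G}\Gamma_g\ \in\ A^3(X\times X) \]
as the main tool. One checks $p\circ p=p$ directly from $\Gamma_g\circ\Gamma_h=\Gamma_{gh}$. With $\QQ$--coefficients, and because $X/G$ has quotient (hence rational) singularities while the exceptional divisors in $Y\to X/G$ are uniruled and so contribute nothing to $A^3_{hom}$, the standard comparison yields
\[ A^3_{hom}(Y)\ \cong\ A^3_{hom}(X/G)\ \cong\ A^3_{hom}(X)^G\ =\ p_\ast A^3_{hom}(X). \]
I read (\rom1) as ``every $g\in G$ is symplectic'' and (\rom2) as ``some $g\in G$ is non-symplectic,'' the two cases being exhaustive.

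For part (\rom1), I would apply the argument behind Theorem \ref{main1} to each graph $\Gamma_g$. That argument yields slightly more than the bare statement: from $(\Gamma_g-\Delta-R_0-R_1-R_2)^{\circ N}=0$ in $A^3(X\times X)$ together with the vanishing of each $(R_\ell)_\ast$ on $A^3_{hom}(X)$, one obtains
\[ (g_\ast - \mathrm{id})^N\ =\ 0\quad\text{on}\ A^3_{hom}(X), \]
so $g_\ast$ is unipotent. Since $g$ has finite order, $g_\ast$ is also of finite order and hence semisimple; a semisimple unipotent operator is the identity. Thus $g_\ast=\mathrm{id}$ on $A^3_{hom}(X)$ for every $g\in G$, so $G$ acts trivially on $A^3_{hom}(X)$, and the opening paragraph gives $A^3_{hom}(Y)=A^3_{hom}(X)^G=A^3_{hom}(X)$.

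For part (\rom2), the existence of a non-symplectic $g_0\in G$ means that the character $\chi\colon G\to\CC^\ast$ obtained from the action of $G$ on $H^{0,3}(X)\cong\CC$ is non-trivial. By orthogonality of characters, $\sum_{g\in G}\chi(g)=0$, which translates into $p_\ast\vert_{H^{0,3}(X)}=0$. Thus $\Delta - p$ is a symplectic correspondence, and Theorem \ref{main1} applied to $\Delta - p$ gives that $\mathrm{id} - p_\ast=(\Delta-p)_\ast$ is an isomorphism of $A^3_{hom}(X)$. As $p_\ast$ is idempotent, $\mathrm{id}-p_\ast$ is injective only when $p_\ast$ is zero, whence $A^3_{hom}(Y)=p_\ast A^3_{hom}(X)=0$. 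The main obstacle I anticipate is not conceptual but bookkeeping: the identification $A^3_{hom}(Y)\cong A^3_{hom}(X)^G$ in the opening paragraph requires standard but non-trivial input on the Chow groups of varieties with quotient singularities and their resolutions, which is routine for the examples at hand.
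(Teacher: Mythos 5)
Your proof is correct, but it takes a genuinely different route from the paper's. The paper deduces the corollary from Theorem \ref{main2} (which is why it is stated as a corollary of that theorem): after blowing up $X$ so that $p\colon \wt{X}\to Y$ becomes a generically finite morphism of degree $s$, the relation $p_\ast p^\ast=s\cdot\hbox{id}$ exhibits $t(Y)$ as a direct summand $T_0$ of $t(X)$; indecomposability then forces $T_0=t(X)$ or $T_0=0$, and one decides which alternative occurs by checking whether $H^{3,0}$ survives the quotient (it does precisely when every $g$ is symplectic). Your argument bypasses Theorem \ref{main2} entirely and runs everything through Theorem \ref{main1}: in case (\rom1) you apply its proof to each $\Gamma_g$ and upgrade ``$g_\ast$ unipotent of finite order'' to ``$g_\ast=\hbox{id}$'' on $A^3_{hom}(X)$, and in case (\rom2) you apply the theorem to the symplectic correspondence $\Delta-p$ and use that an injective idempotent is the identity; both linear-algebra steps are sound, and your reading of the (ambiguously quantified) hypotheses matches what the paper's proof actually uses. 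What your route costs is the identification $A^3_{hom}(Y)\cong A^3_{hom}(X)^G$: the step $A_0(X/G)_{\QQ}\cong A_0(X)_{\QQ}^G$ is standard, but for $A_0(Y)\cong A_0(X/G)$ you need the fibres of the resolution to be $A_0$--trivial (rational chain connectedness of fibres over quotient/klt singularities, or the explicit description of the crepant resolutions in the examples at hand) --- ``the exceptional divisors are uniruled'' is not by itself sufficient justification, since a uniruled surface can carry a nontrivial group of degree--zero $0$--cycles. The paper's blow-up-upstairs device avoids this input altogether and yields the stronger motivic conclusions $t(X)\cong t(Y)$, resp. $t(Y)=0$; your route, on the other hand, isolates the sharper fact that every finite-order symplectic automorphism acts as the identity (not merely as an automorphism) on $A^3_{hom}(X)$, and does not need the indecomposability theorem at all.
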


\begin{proof}

\noindent
(\rom1) After blowing up $X$ (which doesn't change $A^3$), we may assume the rational map $p\colon X\to Y$ is a morphism, i.e. $Y=X/G$.
The morphism $p$ induces a map of motives
  \[ p\colon\ \ t(X)\ \to\ t(Y)\ \ \hbox{in}\ \MM_{\rm rat}\ .\]
  Since
  \[  p_\ast p^\ast=s\cdot\hbox{id}\colon\ \ A^3_{hom}(Y)\ \to\ A^3_{hom}(Y)\ \]
  (where $s$ is the number of elements of $G$), this map of motives has a right--inverse (given by ${1/s}$ times the transpose of the graph of $p$). By general properties of pseudo--abelian categories, this means \cite[Remark 1.7]{Sch} that $t(Y)$ is (non--canonically) a direct summand of $t(X)$, i.e. we can write
  \[  t(X)=T_0\oplus T_1\ \ \hbox{in}\ \MM_{\rm rat}\ ,\]
  such that $p$ induces an isomorphism $T_0\cong t(Y)$. The motive $T_0$ cannot be $0$ (if it were $0$, then a fortiori $t(Y)\in\MM_{\rm hom}$ would be $0$ and hence $H^{3,0}(X)=H^{3,0}(Y)=0$, which is absurd). Applying theorem \ref{main2}, it follows that $T_0=t(X)$ and so
    \[ p\colon\ \ t(X)\ \xrightarrow{\cong}\ t(Y)\ \ \hbox{in}\ \MM_{\rm rat}\ .\]
    
  \noindent
  (\rom2) As in the proof of (\rom1), we have a splitting
  \[ t(X)=T_0\oplus T_1\ \ \hbox{in}\ \MM_{\rm rat}\ ,\]
  such that $p$ restricts to an isomorphism $T_0\cong t(Y)$. The motive $T_0$ cannot be all of $t(X)$ (if it were, then also $p\colon t(X)\cong t(Y)$ in $\MM_{\rm hom}$ and hence $H^{3,0}(X)\cong H^{3,0}(Y)$. But this is absurd, for the projector ${1\over s}\sum_{g\in G} \Gamma_g$ acts as $0$ on $H^{3,0}(X)$). It follows that $T_0=0$ and so
  \[  t(Y)=0\ \ \hbox{in}\ \MM_{\rm rat}\ .\]
 \end{proof}

\section{Voisin's conjecture}

\begin{conjecture}[Voisin \cite{V9}]\label{voisin} Let $X$ be a Calabi--Yau threefold. For any $k\ge 2$, let the symmetric group $S_k$ act on $X^k$ by permutation of the factors.
Let $pr_k\colon X^k\to X^{k-1}$ denote the projection obtained by omitting one of the factors. The induced map 
  \[  (pr_k)_\ast\colon\ \ A_j(X^k)^{S_k}\ \to\ A_j(X^{k-1})\]
  is injective for $j\le k-2$.
\end{conjecture}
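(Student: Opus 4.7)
The plan is to combine the motivic decomposition from Theorem \ref{main2} with Kimura's finite--dimensionality, analyzing the symmetric power $\text{Sym}^k h(X)$ in detail. I would work under the hypotheses of Theorem \ref{main2} (finite--dimensional motive, $B(X)$, generalized Hodge conjecture for $H^3(X)$) together with the additional assumption $h^{2,1}(X)=0$ which is needed to make $H^3(X)$ two--dimensional; these conditions are satisfied in particular whenever $X$ is rigid and rationally dominated by a product of elliptic curves.

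The first ingredient is the refined Chow--K\"unneth decomposition of Vial: under $h^{2,1}(X)=0$, it takes the clean form
\[
h(X)\;=\; \mathbf{1}\oplus \mathbf{1}(-1)^{\oplus \rho}\oplus \mathbf{1}(-2)^{\oplus \rho}\oplus \mathbf{1}(-3)\oplus t(X)\;=:\; h_{\rm alg}(X)\oplus t(X),
\]
where $\rho=h^{1,1}(X)$ and $t(X)$ is the transcendental motive of Theorem \ref{main2}, with $H^\ast(t(X))=H^3(X)$ concentrated in degree $3$ and of total dimension $2$. The decisive observation is then that $t(X)$ is finite--dimensional with cohomology purely in odd degree, so in Kimura's sense it is \emph{oddly} $2$--dimensional and hence
\[
\text{Sym}^3\,t(X)\;=\;0 \quad \text{in } \MM_{\rm rat}.
\]
This vanishing is the crux: it replaces an a priori infinite family of symmetric powers by a finite list of graded pieces.

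Next I would translate the conjecture into motivic language. The group $A_j(X^k)^{S_k}$ is captured by the motivic symmetric power $\text{Sym}^k h(X)$, and using the $\mathbb{Z}/2$--graded structure $h(X)=h_{\rm alg}(X)\oplus t(X)$ (even $\oplus$ odd) together with the vanishing above, one obtains
\[
\text{Sym}^k h(X)\;=\;\bigoplus_{s=0}^{\min(k,2)} \text{Sym}^{k-s}h_{\rm alg}(X)\;\otimes\;\text{Sym}^s\,t(X).
\]
On each of these three graded pieces one must show that $(pr_k)_\ast$ is injective on the Chow group of $(3k-j)$--cycles for $j\le k-2$. When $s=0$ the piece is a direct sum of Tate twists and the injectivity reduces to multilinear algebra on symmetric powers of $h_{\rm alg}(X)$. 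When $s=1$ or $s=2$, the pushforward $(pr_k)_\ast$ integrates out one tensor factor, and the remaining algebraic tail $\text{Sym}^{k-s}h_{\rm alg}(X)$ carries enough independent Tate directions to separate classes, provided $j\le k-2$.

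The main obstacle I anticipate is this last combinatorial step on the mixed pieces $s=1,2$: the bound $j\le k-2$ is sharp precisely because one needs at least two remaining "algebraic" tensor slots to prevent a symmetrized point class in the omitted factor from masking a nontrivial $t(X)$--contribution. Carrying it out requires writing the symmetric--power projectors explicitly in terms of the $\Pi_{i,j}$, computing the action of $(pr_k)_\ast$ on each, and then using the nilpotence theorem to pass from homological identities to rational equivalence. Once these three cases are verified and recombined, injectivity of $(pr_k)_\ast$ on $A_j(X^k)^{S_k}$ for $j\le k-2$ follows.
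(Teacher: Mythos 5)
Your first half is sound and matches the paper: under rigidity $\dim H^3(X)=2$, the motive $h^3(X)$ (equivalently $t(X)$) is oddly $2$--dimensional, and finite--dimensionality forces $\hbox{Sym}^k h^3(X)=0$ in $\MM_{\rm rat}$ for $k\ge 3$ (the paper phrases this as $\Lambda_k=0$ in cohomology plus the nilpotence theorem, which is the same argument). Also, you do not need the generalized Hodge conjecture for $H^3(X)$ here; rigidity replaces it.

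There is, however, a genuine gap in the second half, and it sits exactly at the critical case. First, your reduction is not the right one: the conjecture is not proved by showing $(pr_k)_\ast$ is injective on each graded piece of $\hbox{Sym}^k h(X)$ separately --- some pieces are annihilated \emph{entirely} by $(pr_k)_\ast$ (already for $k=2$, $(pr_2)_\ast(a\times a')=\deg(a')\,a=0$ for degree--zero $0$--cycles), so for those pieces one must prove that the relevant Chow groups \emph{vanish}, not that classes are ``separated by Tate directions in the algebraic tail''. The paper instead invokes the equivalence (stated in the remark preceding Theorem \ref{main3}) that the conjecture amounts to $A_j\bigl(\hbox{Sym}^k h^3(X)\bigr)=0$ for $j\le k-2$, which isolates the problem in the single motive $\hbox{Sym}^k h^3(X)$. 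Second, and more seriously, your proposed mechanism degenerates precisely at $k=2$, $s=2$: there the ``algebraic tail'' is $\hbox{Sym}^0 h_{\rm alg}(X)=\mathbf{1}$, so there are no Tate directions to separate anything, and the whole content of the conjecture for $k=2$ is that $A_0\bigl(\hbox{Sym}^2 h^3(X)\bigr)=0$ even though $\hbox{Sym}^2 h^3(X)$ is a nonzero motive (its $H^6$ is one--dimensional). The missing idea is the paper's: the generator of $\hbox{Sym}^2 H^3(X)\subset H^6(X\times X)$ is the cohomology class of the algebraic cycle $\Pi_3$ itself, so the Hodge conjecture holds for this line for free; hence $\Lambda_2=\Pi_3\times\Pi_3$ in $H^{12}(X^2\times X^2)$, the nilpotence theorem upgrades this to $A^6(X^2\times X^2)$, and the product correspondence $\Pi_3\times\Pi_3$ visibly kills $A_j(X^2)$ for $j\le 2$. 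Without identifying $\hbox{Sym}^2 t(X)$ as a Tate motive in this way, your argument does not close.
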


\begin{remark} Suppose $X$ has a Chow--K\"unneth decomposition $h(X)=\sum_i h^i(X)$ in $\MM_{\rm rat}$. Then conjecture \ref{voisin} is equivalent to the following: for any $k\ge 2$, the Chow motive $\hbox{Sym}^k h^3(X)$ satisfies
  \[ A_j \bigl( \hbox{Sym}^k h^3(X)\bigr)=0\ \ \hbox{for\ all\ }j\le k-2\ .\]
  In case $k=2$, conjecture \ref{voisin} predicts the following concrete statement about $0$--cycles: let $a,a^\prime\in A^3_{hom}(X)$ be two $0$--cycles of degree $0$. Then
  \[  a\times a^\prime= - a^\prime\times a\ \ \hbox{in}\ A^6(X\times X)\ .\]
 \end{remark}
 
 \begin{remark} A conjecture similar to conjecture \ref{voisin} can be formulated for varieties of geometric genus $1$ in any dimension. We refer to \cite{V9} and \cite[Conjecture 4.37 and Example 4.40]{Vo} for precise statements, and verifications in certain cases.
 \end{remark}

\begin{theorem}\label{main3} Let $X$ be a Calabi--Yau threefold. Assume moreover

\noindent
(\rom1) $X$ has finite--dimensional motive;

\noindent
(\rom2) $B(X)$ is true;
%The Hodge conjecture is true for $X\times X$;

\noindent
(\rom3) $X$ is {\em rigid\/}, i.e. $h^{2,1}(X)=0$.

Then conjecture \ref{voisin} is true for $X$.

\end{theorem}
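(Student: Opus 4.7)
The plan is to combine the refined Chow--K\"unneth decomposition afforded by (i) and (ii) with Kimura's structure theory, using rigidity (iii) to force $h^3(X)$ to be as small as possible. Under (i) and (ii), Vial's construction yields the projector $\Pi_3\in A^3(X\times X)$, and $h^3(X)=(X,\Pi_3,0)$ is a direct summand of the finite-dimensional motive $h(X)$. Rigidity forces $H^3(X)=H^{3,0}(X)\oplus H^{0,3}(X)$ to be two-dimensional and to coincide with $H^3_{\rm tr}(X)$, so that $h^3(X)=t(X)$. The sign conjecture (a consequence of (ii)) then shows that $h^3(X)$, being finite-dimensional with cohomology concentrated in odd degree, is \emph{oddly} finite-dimensional of Kimura dimension $2$. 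By Kimura's theorem, $\hbox{Sym}^3 h^3(X)=0$ in $\MM_{\rm rat}$; and since the $S_k$-symmetrizer factors through the $S_3$-symmetrizer on any three of the factors, $\hbox{Sym}^k h^3(X)=0$ for every $k\ge 3$.

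Invoking the motivic reformulation in the remark following Conjecture \ref{voisin}, the conjecture is equivalent to the vanishing of $A_j(\hbox{Sym}^k h^3(X))$ for $j\le k-2$. The case $k\ge 3$ is immediate from the previous paragraph; only the case $k=2$, $j=0$ remains, which requires $A_0(\hbox{Sym}^2 h^3(X))=0$. To address this, I would first compute the cohomology of $\hbox{Sym}^2 h^3(X)$: the Koszul signs appearing in the Kimura $S_2$-symmetrizer applied to an odd motive convert the symmetric projector into the antisymmetric one on cohomology, so that $H^{\ast}(\hbox{Sym}^2 h^3(X))=\wedge^2 H^3(X)$ is a one-dimensional $\QQ$-vector space of Hodge type $(3,3)$ inside $H^6(X\times X)$, generated by the K\"unneth component $\pi_3$ of the diagonal.

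The heart of the argument is then to establish $\hbox{Sym}^2 h^3(X)\cong \mathbf{1}(-3)$ in $\MM_{\rm rat}$. Because $B(X)$ holds, the class $\pi_3$ is represented by the algebraic cycle $\Pi_3\in A^3(X\times X)$; because the intersection pairing on $H^3(X)$ is skew-symmetric, $\Pi_3$ is symmetric under the swap of factors and hence lies in the $\hbox{Sym}^2 h^3(X)$-summand. It therefore provides a morphism $\alpha:\mathbf{1}(-3)\to \hbox{Sym}^2 h^3(X)$, while its transpose provides a morphism $\beta:\hbox{Sym}^2 h^3(X)\to \mathbf{1}(-3)$; the Lefschetz trace formula computes the composition $\beta\circ\alpha$ to be $\int_{X\times X}\Pi_3\cdot\Pi_3=\pm b_3(X)=\pm 2$, which is nonzero. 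After rescaling, $\mathbf{1}(-3)$ splits off as a direct summand of $\hbox{Sym}^2 h^3(X)$; the complementary summand has vanishing cohomology and therefore vanishes in $\MM_{\rm rat}$ by the nilpotence theorem (Theorem \ref{nilp}). Thus $\hbox{Sym}^2 h^3(X)\cong \mathbf{1}(-3)$, whose codimension-$6$ Chow group is zero, so $A_0(\hbox{Sym}^2 h^3(X))=0$. The main obstacle I foresee is this last step---carefully lifting the cohomological splitting to an isomorphism in $\MM_{\rm rat}$ via finite-dimensionality while keeping sign and normalization conventions consistent.
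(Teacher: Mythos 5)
Your proposal is correct and follows essentially the same strategy as the paper: reduce Conjecture \ref{voisin} to the vanishing of $A_j(\hbox{Sym}^k h^3(X))$, kill the case $k\ge 3$ using $\dim H^3(X)=2$ together with finite--dimensionality (Kimura's odd--dimensionality theorem is exactly the nilpotence argument the paper uses on $\Lambda_k$), and handle $k=2$ via the algebraicity of the K\"unneth component $\pi_3$ spanning the one--dimensional space $\wedge^2 H^3(X)\subset H^6(X^2)$, lifted to rational equivalence by the nilpotence theorem. The only difference is cosmetic: the paper identifies the projector $\Lambda_2$ with the decomposable cycle $\Pi_3\times\Pi_3$ (which then annihilates $A_j(X^2)$ for $j\le 2$), whereas you split off $\mathbf{1}(-3)$ using the nondegenerate pairing $\deg(\Pi_3\cdot\Pi_3)=\pm 2$ and kill the cohomologically trivial complement --- the same computation packaged as an isomorphism $\hbox{Sym}^2 h^3(X)\cong\mathbf{1}(-3)$.
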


\begin{proof} Hypotheses (\rom1) and (\rom2) ensure the existence of a Chow--K\"unneth decomposition $\Pi_i$, i.e.
  \[ h(X)= h^0(X)\oplus \cdots \oplus h^6(X)\ \ \hbox{in}\ \MM_{\rm rat}\ ,\]
  where $h^i(X)=(X,\Pi_i,0)$.
 Let
  \[ \Lambda_k:= {1\over k!}\Bigl(\displaystyle \sum_{\sigma\in S_k} \Gamma_\sigma\Bigr)\circ (\Pi_3^{\otimes k})\ \ \in A^{3k}(X^k\times X^k)\ .\]
 On the level of cohomology, the correspondence $\Lambda_k$ is a projector on $\hbox{Sym}^k H^3(X)\subset H^{3k}(X^k)$; on Chow--theoretical level $\Lambda_k$ is idempotent and defines
 the Chow motive $\hbox{Sym}^k h^3(X)$ in the language of \cite{Kim}. 
 
 Hypothesis (\rom3) implies that $\dim H^3(X)=2$, hence for $k\ge 3$ one has
   \[ \Lambda_k=0\ \ \hbox{in}\ H^{6k}(X^k\times X^k)\ .\]
   Using the nilpotence theorem, it follows that
   \[ \Lambda_k=0\ \ \hbox{in}\ A^{3k}(X^k\times X^k)\ .\]
   
   It only remains to check the case $k=2$. Note that $\hbox{Sym}^2 H^3(X)$ has dimension $1$, and
     \[  \hbox{Sym}^2 H^3(X)\ \subset\ H^6(X^2)\cap F^3\ .\]
     What's more, the Hodge conjecture is true for this subspace, since
     \[   \hbox{Sym}^2 H^3(X)=\QQ\cdot \Pi_3 \ \subset\ H^6(X^2)\ .\]
    It follows that 
    \[ \Lambda_2= \Pi_3\times \Pi_3\ \ \hbox{in}\ H^{12}(X^2\times X^2)\ ,\]
    and hence (using the nilpotence theorem)
    \[ \Lambda_2= \Pi_3\times \Pi_3\ \ \hbox{in}\ A^{6}(X^2\times X^2)\ .\]     
   It follows that
    \[  (\Lambda_2)_\ast \bigl(A_j(X^2)\bigr) =0\ \ \hbox{for\ all}\ j\le 2\ ,\]
    i.e. a strong form of Voisin's conjecture is true.
  \end{proof}

\begin{remark} Theorem \ref{main3} applies to the examples in subsection \ref{rigid}, and also to the two examples of remark \ref{more}.
\end{remark}

\begin{remark} 
In the proof of theorem \ref{main3}, we have used the condition $\dim H^3(X)=2$, which is a consequence of hypothesis (\rom3). By replacing in the proof the correspondence $\Pi_3$ by $\Pi_{3,0}$ (i.e., replacing the motive $h^3(X)$ by $t(X)$), it is enough to assume 
  \[ \dim H^3_{tr}(X)=2\ ,\]
  a condition a priori weaker than (\rom3).

\end{remark}

%\vskip1cm

\begin{acknowledgements} The ideas developed in this note grew into being during the Strasbourg 2014---2015 groupe de travail based on the monograph \cite{Vo}. Thanks to all the participants of this groupe de travail for a pleasant and stimulating atmosphere. Thanks to Charles Vial for interesting email correspondence, and to Bert van Geemen for informing me about the Beauville threefold and providing the reference \cite{FG}. Thanks to the referee for helpful remarks on a prior version.
%Thanks to Olivier Benoist and Charles Vial for helpful conversations related to this note.
Many thanks to Yasuyo, Kai and Len for making it possible to work at home in Schiltigheim.
\end{acknowledgements}

%\vskip1cm

\end{document}